\newtheorem{theorem}{Theorem}
\theoremstyle{definition}
\newtheorem{example}{Example}
\newtheorem{definition}{Definition}
\newtheorem{remark}{Remark}
\date{}
\title{\Large \textbf{Multi-tribrackets}}
\author{
Sam Nelson\footnote{Email: Sam.Nelson@cmc.edu. Partially supported by Simons Foundation collaboration grant $\#316709$.}\and
Evan Pauletich\footnote{Email: EPauletich19@students.claremontmckenna.edu}}
\begin{document}
\maketitle

\begin{abstract} 
We introduce \textit{multi-tribrackets}, algebraic structures for region
coloring of diagrams of knots and links with different operations at different
kinds of crossings. In particular we consider the case of \textit{component
multi-tribrackets} which have different tribracket operations at 
single-component crossings and multi-component crossings. We provide examples
to show that the resulting counting invariants can distinguish links which are
not distinguished by the counting invariants associated to the standard 
tribracket coloring. We 
reinterpret the results of \cite{NP} in terms of multi-tribrackets and consider 
future directions for multi-tribracket theory.
\end{abstract}

\parbox{5.5in} {\textsc{Keywords:} Niebrzydowski tribrackets, enhancements,
oriented knot and link invariants, link tribrackets

\smallskip

\textsc{2010 MSC:} 57M27, 57M25}

\section{Introduction}

\textit{Niebrzydowski Tribrackets}, also known as \textit{knot-theoretic 
ternary quasigroups}, are ternary operations on sets which satisfy
conditions motivated by the Reidemeister moves in knot theory. A set $X$ with
a ternary operation in which the actions of all three variables are invertible
is called a \textit{ternary quasigroup}, and a ternary quasigroup is 
\textit{knot-theoretic} if it satisfies a condition related to the 
Reidemeister III move. 

In \cite{MN}, Niebrzydowski considered region colorings of knot diagrams by
ternary quasigroups. In \cite{MN2} he introduced a (co)homology theory for
these objects analogous to quandle (co)homology and further developed this
theory in \cite{MN3}\ and \cite{MN4}. An isomorphism between this 
(co)homology theory and a cohomology theory of objects called 
\textit{local biquandles} was established in \cite{NOO} by the first author 
and collaborators.

In \cite{dsn2} \textit{biquasiles}, algebraic structures defined by pair of
quasigroups satisfying certain conditions related to Reidemeister III moves 
interpreted in terms of \textit{dual graph diagrams}, were introduced. In 
\cite{cnn} the biquasile coloring invariant was enhanced with Boltzmann weights,
which can be understood as special cases of tribracket cocycle invariants
defined in \cite{MN2}. In \cite{KN} biquasile colorings and Boltzmann 
enhancements were used to study orientable surface-links.
 
In \cite{NP}, tribracket colorings were extended to virtual knots and links
and in \cite{GNT}, tribrackets were enhanced with partial products to define
\textit{Niebrzydowski algebras}, algebraic structures for coloring $Y$-oriented
spatial graphs and handlebody-links. 

In this paper we define \textit{multi-tribrackets}, a generalization of the 
tribracket structure which 
includes virtual tribrackets from \cite{NP} as a special case and which defines
stronger counting invariants for virtual knots and links with multiple 
components. The paper is organized as follows. In Section \ref{T} we review
tribrackets. In Section \ref{M} we introduce multi-tribrackets and identify
special cases for use with virtual knots and with multicomponent links.
We compute examples of counting invariants associated to these structures
and show that they are stronger than the single tribracket counting
invariants. We end in Section \ref{Q} with questions for future research.

\section{Tribrackets}\label{T}

We begin with a definition; see \cite{MN,NOO} etc. for more.

\begin{definition}
Let $X$ be a set. A \textit{horizontal tribracket} operation on
$X$ is a ternary operation $[,,]:X\times X\times X\to X$ 
satisfying
\begin{itemize}
\item[(i)] In the equations $[a,b,c]=d$,
any three of the variables determines the fourth, and
\item[(ii)] For all $a,b,c,d\in X$
\[
 [c,[a,b,c],[a,c,d]] 
=[b,[a,b,c],[a,b,d]] 
=[d,[a,b,d],[a,c,d]]. 
\]
\end{itemize}
\end{definition}

\begin{remark} Axiom (i) can be rephrased in a few ways. One is that
for every $x,y,z\in X$ there exist unique $a,b,c\in X$ such that
\[[x,y,a]=z,\quad [x,b,y]=z\quad \mathrm{and}\quad[c,x,y]=z.\]
Equivalently, for all $x,y\in X$, the maps
$\alpha_{x,y}, \beta_{x,y},\gamma_{x,y}:X\to X$ defined by
\[\alpha_{x,y}(z) = [x,y,z],\quad \beta_{x,y}(z)=[x,z,y]\quad
\mathrm{and}\quad \gamma_{x,y}(z)=[z,x,y]\]
are invertible.
\end{remark}

\begin{example}
Let $G$ be a group. Then the map $[,,]:G\times G\times G\to G$ 
defined by $[x,y,z]=yx^{-1}z$ is
a horizontal tribracket known as a \textit{Dehn tribracket}.
Let us illustrate axiom (ii):
\[ [c,[a,b,c],[a,c,d]] =[c,ba^{-1}c,ca^{-1}d]=ba^{-1}cc^{-1}ca^{-1}d=ba^{-1}ca^{-1}d
\]
while
\[[b,[a,b,c],[a,b,d]] =[b,ba^{-1}c,ba^{-1}d]=ba^{-1}cb^{-1}ba^{-1}d=ba^{-1}ca^{-1}d\]
and
\[[d,[a,b,d],[a,c,d]]=[d,ba^{-1}d,ca^{-1}d]=ba^{-1}dd^{-1}ca^{-1}d=ba^{-1}ca^{-1}d\]
as required.
\end{example}

\begin{example}
Let $X$ be a module over the ring $\mathbb{Z}[x^{\pm 1}, y^{\pm 1}]$ of 
two-variable Laurent polynomials. Then the map $[,,]:X\times X\times X\to X$
defined by $[a,b,c]=xb+yc-xya$ is a horizontal tribracket known as an
\textit{Alexander tribracket.}
\end{example}

\begin{example}
For a finite set $X=\{1,2,\dots, n\}$ we can specify a tribracket operation
with an \textit{operation 3-tensor}, i.e., an ordered list of $n$ $n\times n$
matrices, where the entry in matrix $i$ row $j$ column $k$ is $[i,j,k]$.
For example, there are two horizontal tribracket maps on $X=\{1,2\}$, which
are specified by the operation 3-tensors
\[
\left[
\left[\begin{array}{rr}1 & 2 \\2 & 1 \end{array}\right],
\left[\begin{array}{rr}2 & 1 \\ 1 & 2\end{array}\right]
\right]\quad\mathrm{and}\quad
\left[
\left[\begin{array}{rr}2 & 1 \\ 1 & 2\end{array}\right],
\left[\begin{array}{rr}1 & 2 \\2 & 1 \end{array}\right]
\right].
\]
\end{example}

The horizontal tribracket axioms are motivated by the following 
region coloring rules:
\[\includegraphics{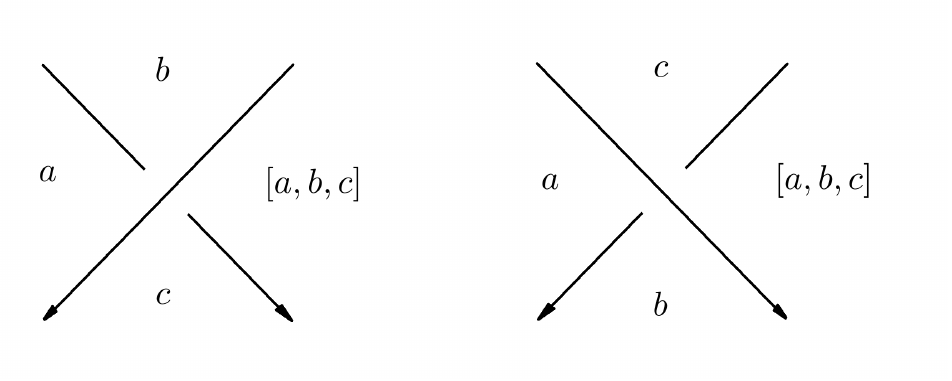}\]
and

An assignment of elements of $X$ to the regions in an oriented link diagram
$L$ satisfying these rules will be called a \textit{tribracket coloring}
or an \textit{$X$-coloring} of $L$.

 Moreover, we have the following:

\begin{theorem} \cite{MN}
Let $X$ be a set with a tribracket and $L$ an oriented link diagram. 
The number $\Phi_X^{\mathbb{Z}}(L)$ of $X$-colorings of $L$ does not change
under Reidemeister moves and hence is an integer-valued invariant of oriented
links.
\end{theorem}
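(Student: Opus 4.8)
The plan is to show that for each Reidemeister move there is a bijection between the $X$-colorings of the two diagrams that agree outside a neighborhood of the move, from which equality of the coloring counts---and hence invariance of $\Phi_X^{\mathbb{Z}}$---follows at once. Since an $X$-coloring is a labeling of the complementary regions of the diagram in the plane subject only to the local constraint imposed at each crossing, and since a Reidemeister move alters the diagram only inside a disk $D$, every coloring of either diagram restricts to a coloring of the regions meeting the boundary of $D$. The whole question then reduces to counting, for each admissible boundary labeling, the number of ways it extends across $D$ in the two local pictures. It suffices to verify this for a generating set of oriented Reidemeister moves, so that only finitely many local configurations need checking.

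For the Reidemeister I and II moves the argument rests on axiom (i). First I would observe that a positive or negative kink introduces exactly one new region whose label must satisfy a single instance of the coloring relation $[a,b,c]=d$ in which the three surrounding labels are already fixed; by axiom (i)---equivalently, the invertibility of the maps $\alpha_{x,y},\beta_{x,y},\gamma_{x,y}$---this label exists and is unique, giving a bijection with the colorings of the straight strand. For the Reidemeister II move the two new crossings produce new regions constrained by two instances of the coloring relation, which I would solve in turn, again invoking unique solvability from axiom (i), to show that each admissible coloring of the surrounding regions extends in exactly one way. Here I would take care to treat the several orientation and over/under variants separately, but each reduces to the same unique-extension statement, so the counts agree.

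The Reidemeister III move is where axiom (ii) enters, and I expect the bookkeeping to be the main obstacle. In this move the regions meeting $\partial D$ retain their labels while the interior labeling may be rearranged; fixing the boundary labels as $a,b,c,d$ and writing out the coloring relations at the three crossings expresses the label of the innermost region in three different ways, one naturally associated to each crossing. The content of the theorem is exactly that these three expressions agree, and this is precisely the three-fold equality recorded in axiom (ii): the identities $[c,[a,b,c],[a,c,d]]=[b,[a,b,c],[a,b,d]]=[d,[a,b,d],[a,c,d]]$ are the compatibility conditions that allow a coloring on one side of the move to be transported to the other, yielding the required bijection. The delicate part is matching the abstract variables $a,b,c,d$ in axiom (ii) to the actual regions of the specific oriented R III diagram and confirming that every oriented variant is covered by the chosen generating set; once this is done, assembling the three moves completes the proof.
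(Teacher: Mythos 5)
Your proposal is correct and follows essentially the same route as the paper: the paper (in its proof of the more general multi-tribracket theorem, which specializes to this cited result) checks each oriented Reidemeister move in a generating set and observes that the unique-extension property for the R I and R II bigon/kink regions is exactly axiom (i), while the three-fold identity of axiom (ii) is exactly the compatibility condition making the inner region of the R III picture consistently colorable on both sides of the move. Your sketch matches this move-by-move bijection argument, including the reduction to colorings that agree outside the disk of the move and the role of each axiom.
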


\begin{remark}
We can also define \textit{vertical tribrackets}, motivated by the
coloring rule
\[\includegraphics{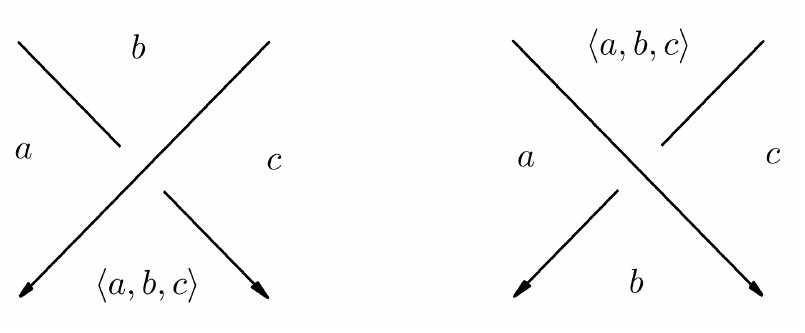}\] 
as follows:
A ternary operation $\langle,,\rangle:X\times X\times X\to X$ is a 
\textit{vertical tribracket} if it satisfies
\begin{itemize}
\item[(i)] In the equations $\langle a,b,c\rangle=d$,
any three of the variable determines the fourth, and
\item[(ii)] For all $a,b,c,d\in X$
\begin{eqnarray*}
\langle a,b,\langle b,c,d\rangle \rangle & = &
\langle a,\langle a,b,c\rangle,\langle\langle a,b,c\rangle,c,d\rangle \rangle\quad \mathrm{and}\\
\langle \langle a,b,c\rangle, c,d\rangle & = &
\langle \langle a,b,\langle b,c,d\rangle\rangle,\langle b,c,d\rangle,d\rangle\\
\end{eqnarray*}
\end{itemize}
From these coloring rules, we can observe that
given a horizontal tribracket, there is an induced vertical tribracket
(and vice-versa) satisfying
\[[a,b,\langle a,b,c\rangle]=c=\langle a,b,[a,b,c]\rangle.\]
In this paper we will work in terms of horizontal tribrackets, but all
of our results can be rephrased in terms of vertical tribrackets.
See also \cite{NOO}. 
\end{remark}

\begin{example}
Let us compute the number of colorings $\Phi_X^{\mathbb{Z}}(L2a1)$ of the 
Hopf link $L2a1$ by the Alexander tribracket structure on $X=\mathbb{Z}_5$
with $x=1$ and $y=2$, i.e. the horizontal tribracket 
$[a,b,c]=1b+2c-(1)(2)a=3a+b+2c$. 
\[\includegraphics{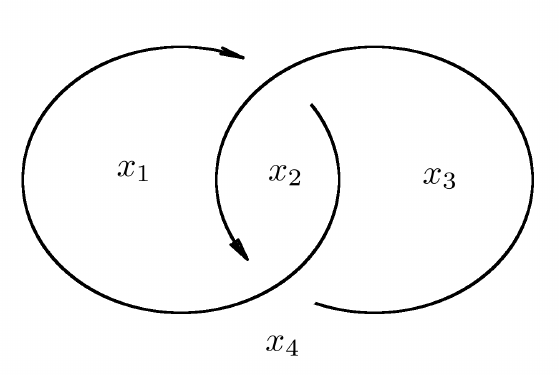}\]
We have coloring equations
\[\begin{array}{rcl}
3x_1+x_2+2x_4 & = & x_3 \\
3x_1+2x_2+x_4 & = & x_3 \\
\end{array}
\]
or in matrix form
\[
\left[\begin{array}{rrrr}
3 & 1 & 4 & 2 \\
3 & 2 & 4 & 1 \\
\end{array}\right]\leftrightarrow
\left[\begin{array}{rrrr}
1 & 0 & 3 & 1 \\
0 & 1 & 0 & 4 \\
\end{array}\right]
\]
so there are $5^2=25$ $X$-colorings of $L2a1$, including
\[\includegraphics{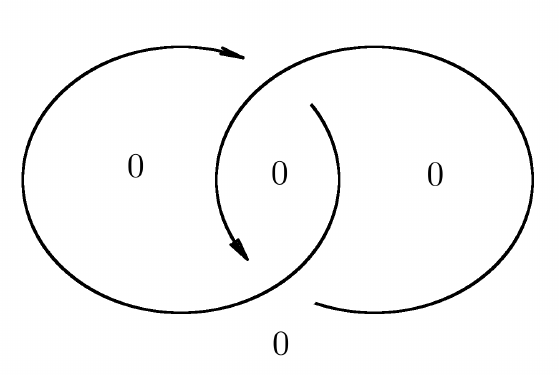}\quad\mathrm{and}\quad 
\includegraphics{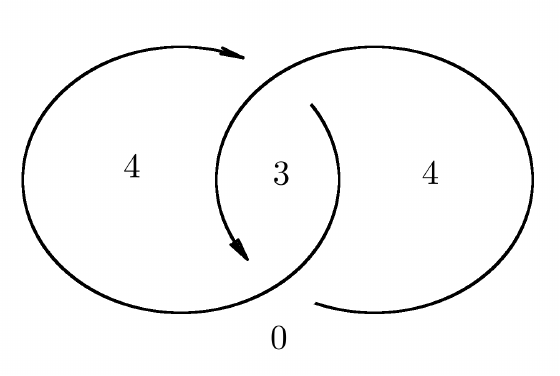}\]
etc. This invaraint thus distinguishes the
Hopf link from the unlink of two components, which has $5^3=125$ $X$-colorings.
\end{example}

\section{Multi-tribrackets}\label{M}

Let us now generalize tribrackets to the case of multi-tribrackets. 
The motivation is to have distinct tribracket operations at different
kinds of crossings with interaction laws determined by the Reidemeister
move analogues with specified crossing types we wish to allow. We begin 
with a definition inspired by conversations of the first author
with Allison Henrich and Aaron Kaestner \cite{HK}:

\begin{definition}
An \textit{explicitly typed oriented knot or link diagram} is a planar 4-valent 
directed graph with every vertex having two adjacent inputs and two adjacent 
outputs and labeled with a \textit{crossing type} chosen from a set $T$ of 
defined crossing types.
\end{definition}

Examples of crossing types include but are not limited to:
\begin{itemize}
\item \textit{Positive and negative classical crossings}
\[\includegraphics{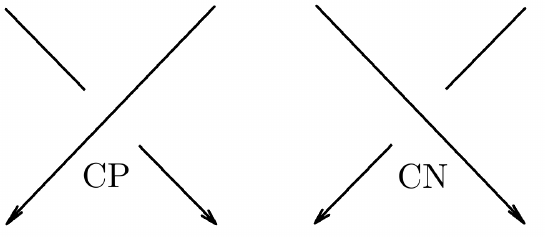}\]
\item \textit{Virtual crossings}
\[\includegraphics{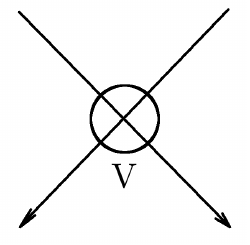}\]
\item \textit{Single-component and multi-component positive
and negative crossings}
\[\includegraphics{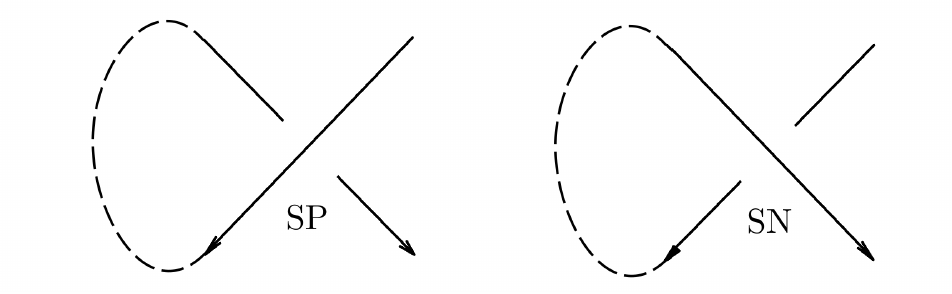}\]\[ \includegraphics{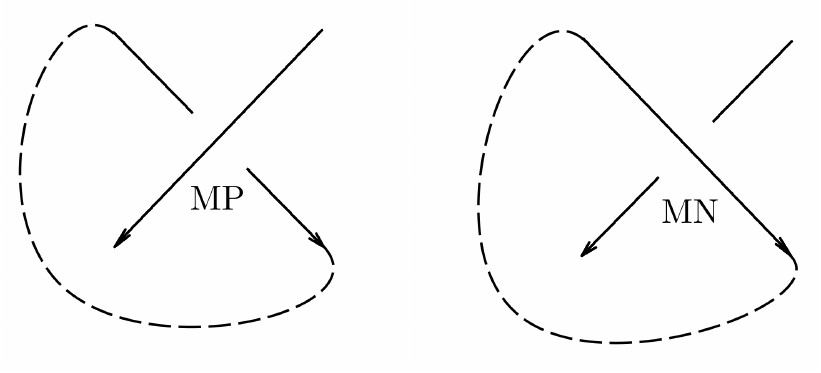}\]
\item \textit{Even and odd parity positive and negative single-component
crossings} where we have an even or odd number of crossing points between 
the over and under instances of the crossing
\[\includegraphics{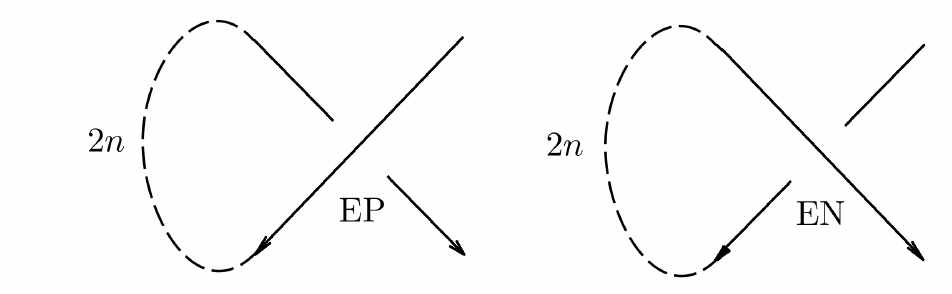}\]\[ \includegraphics{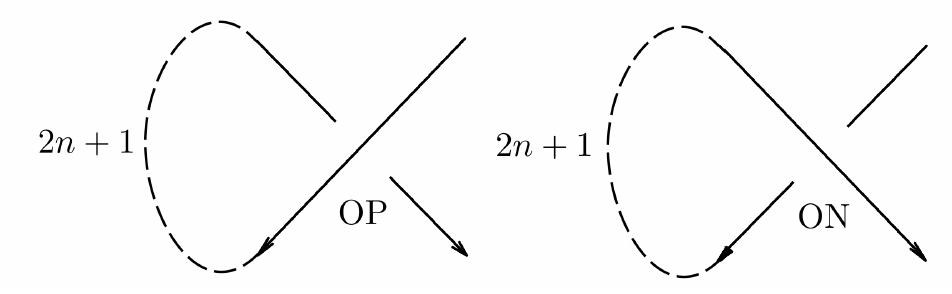}\]
etc.
\end{itemize}

Then given a set of explicit crossing types, we can select versions
of Reidemeister moves of types I, II and III with various combinations
of explicitly typed crossings to define an \textit{explicitly typed knot
theory}. The resulting equivalence classes of knot diagrams will be known 
as \textit{explicitly typed knots}.

\[\includegraphics{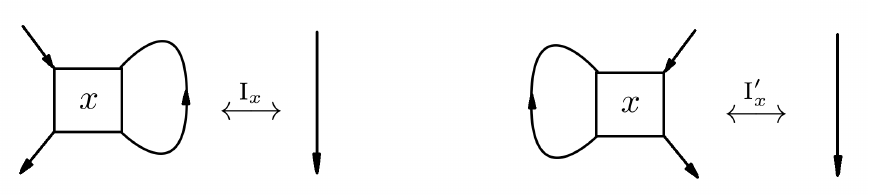}\]
\[\includegraphics{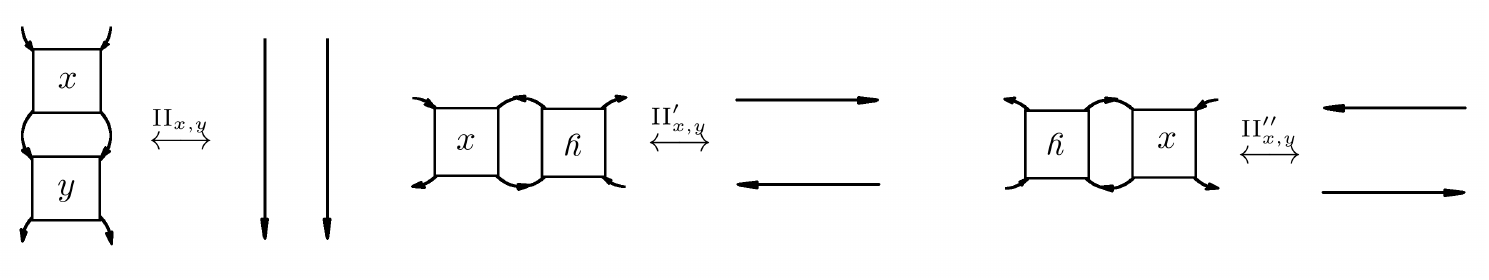}\]
\[\includegraphics{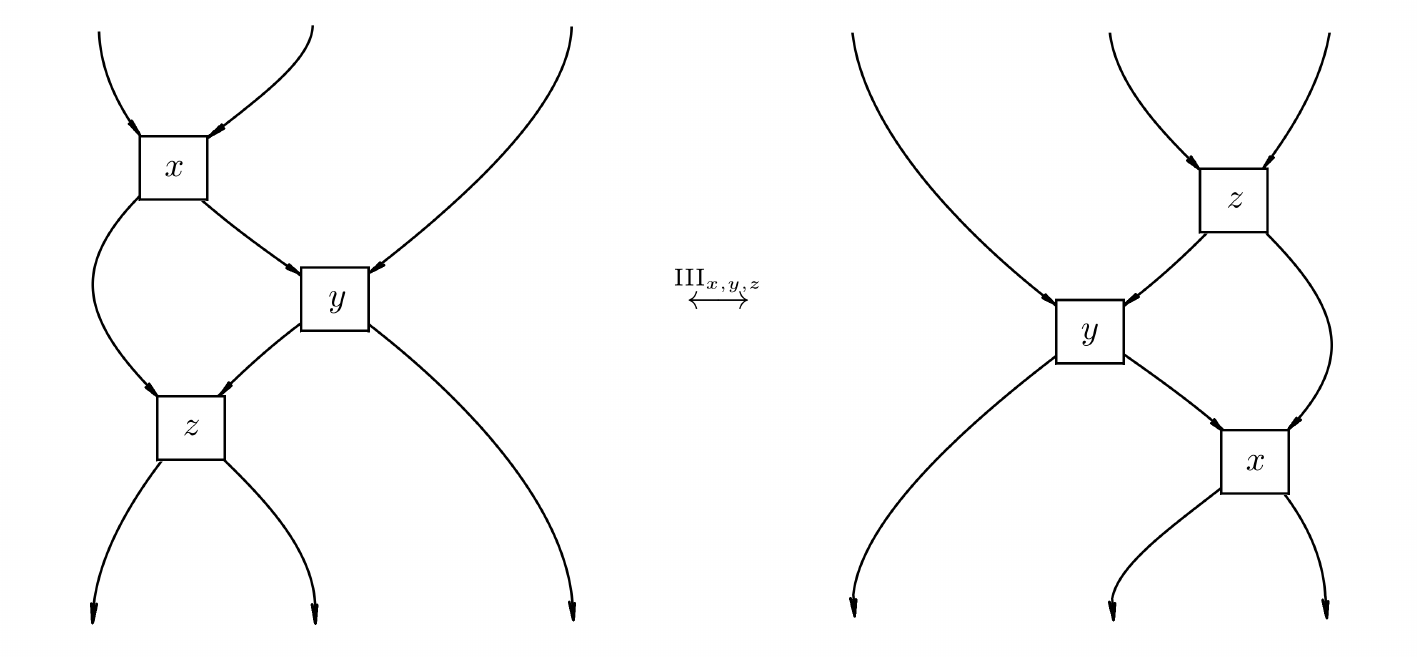}\]
\[\includegraphics{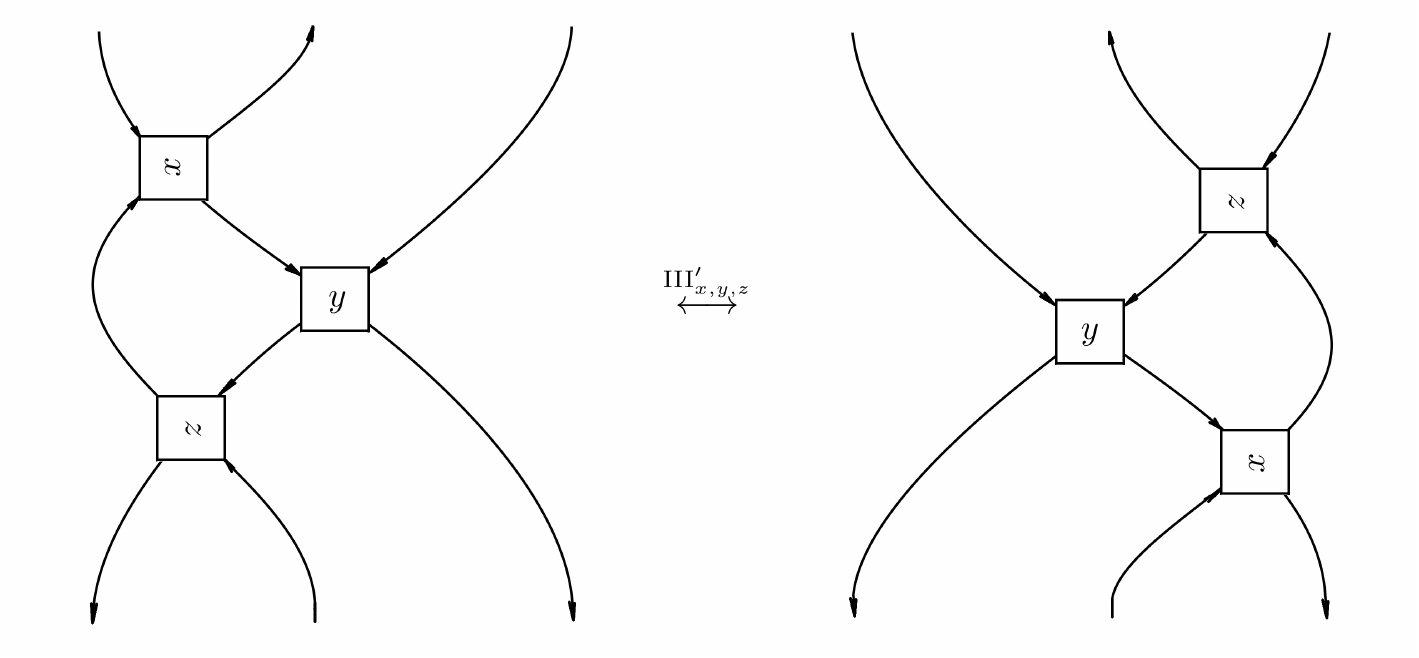}\]

Next, we make our main definition.

\begin{definition}\label{d1}
Let $X$ be a set and suppose we have an explicitly typed knot theory
with set of types $T$ and typed moves $M$. A \textit{horizontal 
multi-tribracket} structure on $X$ of type $(T,M)$ is a set of ternary 
operations \[[,,]_x:X\times X\times X\to X\]
indexed by elements $x\in T$ satisfying
\begin{itemize}
\item[(i)] For every move I$_x\in M$, we have the condition that
\[\forall a,b\in X\ \exists!\ c\in X \ \mathrm{such\ that}\ [a,b,b]_x=c,\]
\item[(i$'$)] For every move I$_x'\in M$, we have the condition that
\[\forall a,b\in X\ \exists!\ c\in X \ \mathrm{such\ that}\ [c,a,a]_x=b,\]
\item[(ii)] For every move II$_{x,y}\in M$, we have the condition that
\[\forall a,b,d\in X\ \exists!\ c\in X \ \mathrm{such\ that}\ [a,b,c]_x=[a,c,b]_y=d,\]
\item[(ii$'$)] For every move II$_{x,y}''\in M$, we have the condition that
\[\forall a,b,c\in X\ \exists!\ d\in X \ \mathrm{such\ that}\ [a,b,c]_x=[a,c,b]_y=d,\]
\item[(ii$''$)] For every move II$_{x,y}''\in M$, we have the condition that
\[\forall b,c,d\in X\ \exists!\ a\in X \ \mathrm{such\ that}\ [a,b,c]_x=[a,c,b]_y=d,\]
\item[(iii)] For every move III$_{x,y,z}\in M$,
we have for all $a,b,c,d\in X$
\[
 [b,[a,b,c]_x,[a,b,d]_y]_z 
=[c,[a,b,c]_x,[a,c,d]_z]_y 
=[d,[a,b,d]_y,[a,c,d]_z]_x.
\]
\item[(iii$'$)] For every move III$'_{x,y,z}\in M$,
we have for all $a,b,c,d\in X$
\[
 [b,[a,b,c]_x,[a,d,b]_z]_y 
=[c,[a,b,c]_x,[a,c,d]_y]_z 
=[d,[a,d,b]_z,[a,c,d]_y]_x.
\]
\end{itemize}
\end{definition}

Definition \ref{d1} is motivated by the the coloring rule
\[\includegraphics{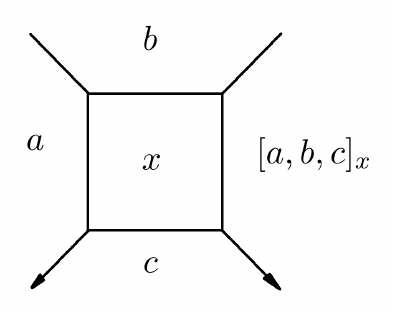}\]
applied to the explicitly typed Reidemeister moves. Specifically, the
axioms are chosen to ensure that given a valid colorings of an explicitly 
diagram on one side of a move, there is a unqiue valid coloring of the diagram
on the other side of the move which agrees with the original outside the
neighborhood of the move. Specifically, we have:

%\[\includegraphics{}\]

\begin{theorem}
The number of colorings of an explicitly typed link diagram by a 
multi-tribracket of type $(T,M)$ is unchanged by Reidemeister moves 
in $M$.
\end{theorem}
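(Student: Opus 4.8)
The plan is to prove invariance of the coloring count by verifying it locally at each typed Reidemeister move in $M$. The standard strategy for counting invariants is to exhibit, for each move, a bijection between the set of colorings of a diagram containing the left-hand side of the move and the set of colorings of the diagram obtained by applying the move. Because a coloring assigns elements of $X$ to the regions of the diagram and the coloring rule is local, it suffices to show that the colorings of the two local tangles agree on the boundary regions of the neighborhood of the move and that each boundary coloring extends uniquely to the interior on each side. This reduces the global statement to a finite check, one per move type appearing in $M$.

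First I would fix notation for a coloring: given an explicitly typed diagram $D$, an $X$-coloring is an assignment of elements of $X$ to the complementary regions of $D$ such that at every crossing of type $x$ the four surrounding regions $a,b,c,d$ (labeled according to the orientation convention in the figure) satisfy $[a,b,c]_x = d$. I would then take a disk neighborhood of the move and observe that the regions meeting the boundary of this disk are the same on both sides, so a coloring of the ambient diagram restricts to a fixed labeling of these boundary regions. The content of the theorem is then exactly that, for each prescribed boundary labeling, the number of ways to color the interior regions on the left equals the number on the right.

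For the move-by-move verification I would match each axiom of Definition \ref{d1} to its move. For a type I move I$_x$, the interior introduces one new region whose color $c$ must satisfy $[a,b,b]_x=c$; axiom (i) says such $c$ exists and is unique, so the one interior region is forced and the count is preserved (and similarly axiom (i$'$) handles I$_x'$). For a type II move, the two sides have differing numbers of interior regions; axioms (ii), (ii$'$), (ii$''$) each assert a unique-existence statement $\exists!\,c$ (resp. $d$, $a$) making $[a,b,c]_x=[a,c,b]_y=d$, which is precisely the claim that the extra interior region on the more-crossed side is determined by, and consistent with, the boundary data, giving a bijection of colorings. For a type III move III$_{x,y,z}$ (and the variant III$'$), both sides have the same boundary regions and the same number of interior regions, and the three expressions in axiom (iii) are the three candidate colors of the central region computed along the two sides of the move; the asserted triple equality forces these computations to agree, so each boundary coloring extends uniquely and identically on both sides. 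Once every move in $M$ is checked this way, invariance follows because any two diagrams of the same explicitly typed link are related by a finite sequence of moves in $M$, and the bijections compose.

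The main obstacle I anticipate is purely bookkeeping rather than conceptual: correctly reading off which of the four regions around each typed crossing plays the role of $a,b,c,d$ in the rule $[a,b,c]_x=d$, consistently with the orientation conventions in the figure \verb|sn-ep-27.pdf|, and then confirming that the algebraic axioms were stated with the labelings that actually arise on the two sides of each move. In particular one must check that the invertibility encoded in axioms (i)--(ii$''$) is exactly the invertibility needed to solve for the interior region colors (the analogue of Remark's maps $\alpha_{x,y},\beta_{x,y},\gamma_{x,y}$ being bijections, now for the mixed-type operations), and that the three-fold equality in (iii)/(iii$'$) is the correct Reidemeister III compatibility for the specific typed move rather than an off-by-one relabeling. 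Since Definition \ref{d1} was constructed precisely so that these matchings hold, the verification should go through directly, move by move, with no essential difficulty beyond careful diagram chasing.
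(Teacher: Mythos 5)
Your proposal is correct and follows essentially the same strategy as the paper's proof: a move-by-move local verification in which each axiom of Definition \ref{d1} supplies exactly the unique-extension (bijection) statement needed for the corresponding typed move, with type I/I$'$ handled by (i)/(i$'$), type II by (ii)--(ii$''$), and type III by the triple equalities (iii)/(iii$'$). The paper carries this out by displaying the colored local diagrams alongside the matching axioms, which is precisely the ``careful diagram chasing'' you describe.
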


\begin{proof}
We consider the explicitly typed moves systematically. 

(i) We note that this one is implied by the fact that $[,,]_x$ is an operation, but we include it for completeness:
\[\includegraphics{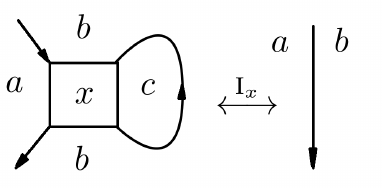}\] 
\[\forall a,b\in X\ \exists!\ c\in X \ \mathrm{such\ that}\ [a,b,b]_x=c,\]
(i$'$)
\[\includegraphics{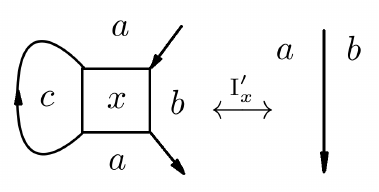}\]
\[\forall a,b\in X\ \exists!\ c\in X \ \mathrm{such\ that}\ [c,a,a]_x=b,\]
(ii)
\[\includegraphics{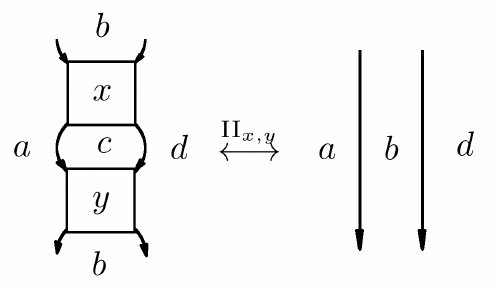}\]
\[\forall a,b,d\in X\ \exists!\ c\in X \ \mathrm{such\ that}\ [a,b,c]_x=[a,c,b]_y=d,\]
(ii$'$)
\[\includegraphics{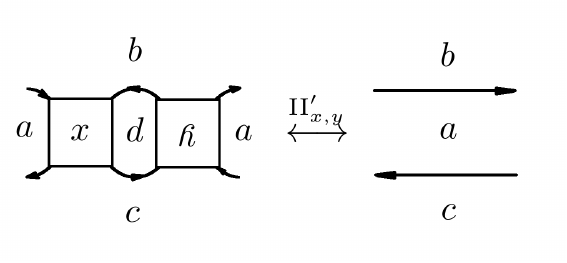}\]
\[\forall a,b,c\in X\ \exists!\ d\in X \ \mathrm{such\ that}\ [a,b,c]_x=[a,c,b]_y=d,\]
(iii)
\[\includegraphics{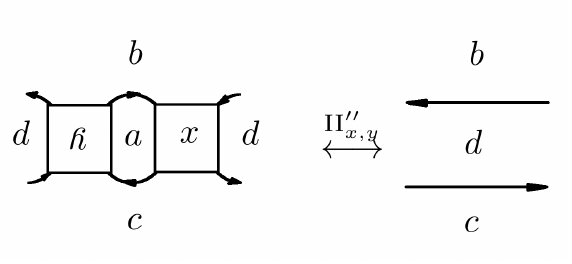}\]
\[\forall b,c,d\in X\ \exists!\ a\in X \ \mathrm{such\ that}\ [a,b,c]_x=[a,c,b]_y=d,\]
(iii$'$)
\[\includegraphics{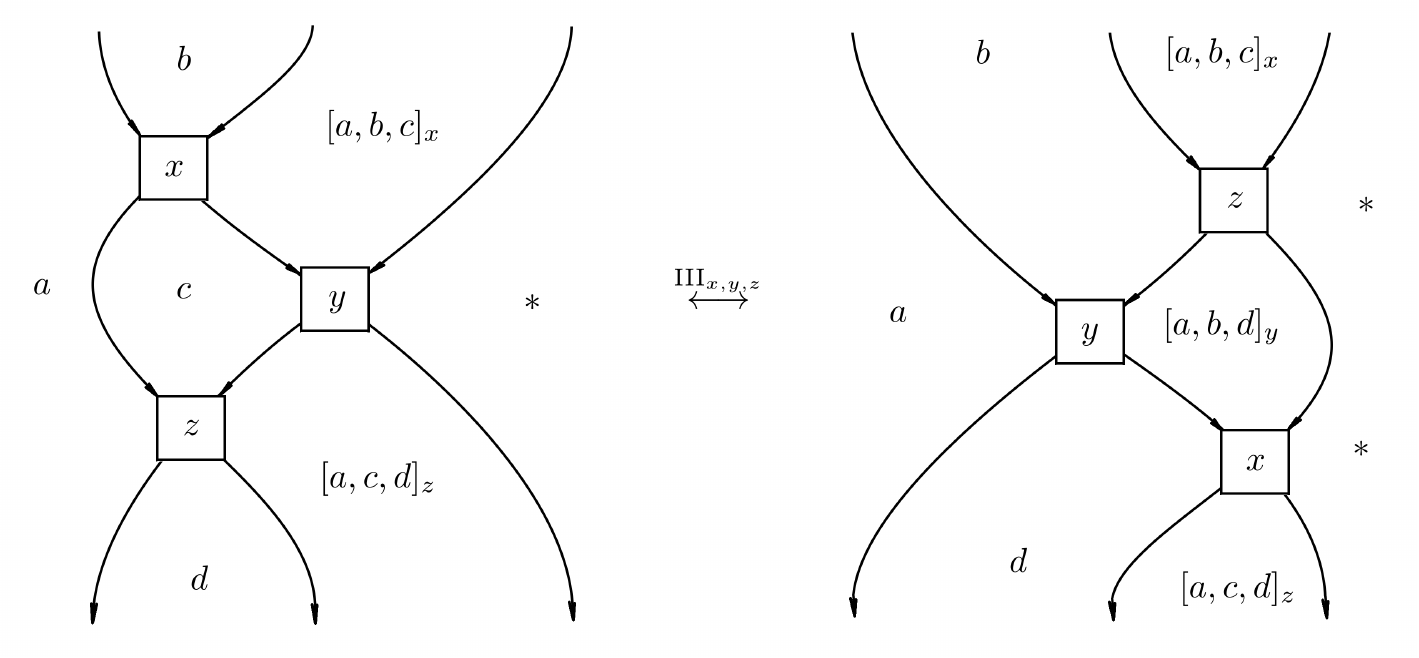}\]
\[
 [b,[a,b,c]_x,[a,b,d]_y]_z 
=[c,[a,b,c]_x,[a,c,d]_z]_y 
=[d,[a,b,d]_y,[a,c,d]_z]_x,
\]
and (iii$'$)
\[\includegraphics{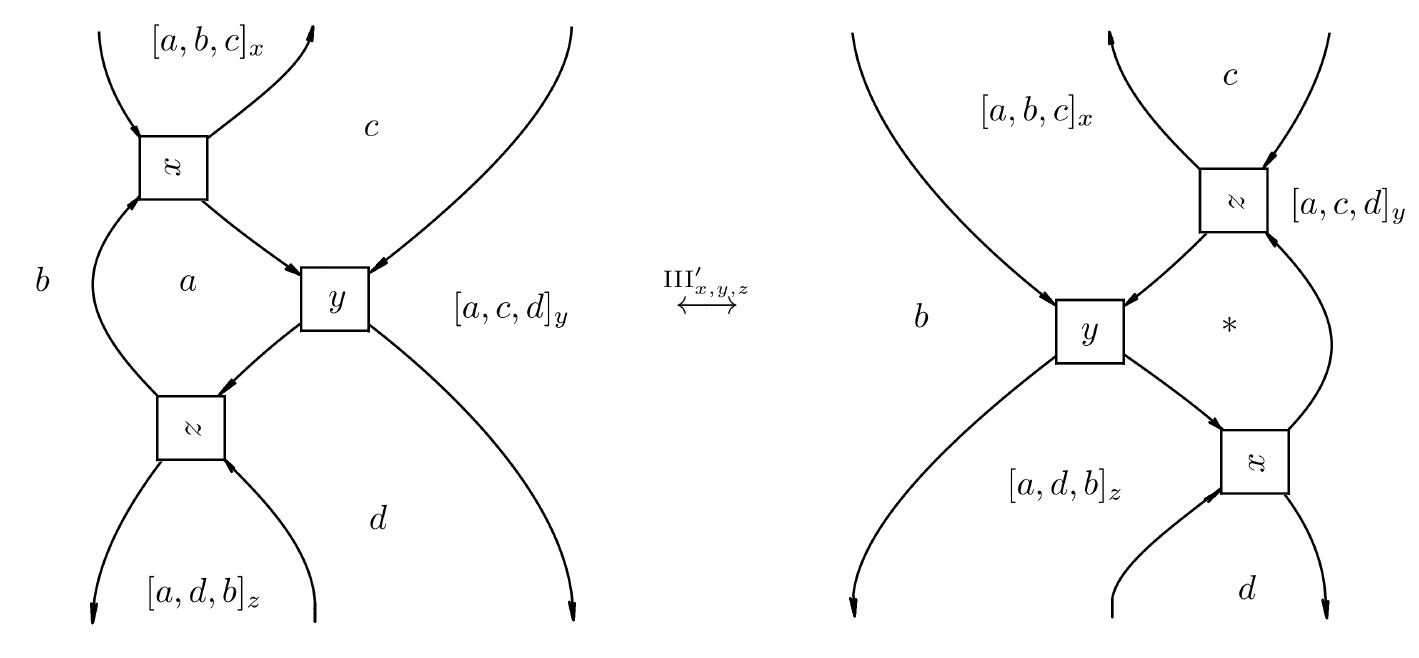}\]
\[
 [b,[a,b,c]_x,[a,d,b]_z]_y 
=[c,[a,b,c]_x,[a,c,d]_y]_z 
=[d,[a,d,b]_z,[a,c,d]_y]_x.
\]
\end{proof}

\begin{definition}
Let $X$ be a multi-tribracket of type $(T,M)$. Then the number of 
$X$-colorings of an oriented link diagram is an integer-valued invariant 
of explicitly typed knots and links of type $(T,M)$. We denote this 
invariant as $\Phi_X^{\mathbb{Z}}(L)$.
\end{definition}

\begin{example}
A tribracket is a multi-tribracket with $T=\{\mathrm{CP},\mathrm{CN}\}$ and 
\[M=\{\mathrm{I}_{\mathrm{CP}}, 
\mathrm{I}_{\mathrm{CN}},\
\mathrm{I}_{\mathrm{CP}}',\
\mathrm{I}_{\mathrm{CN}}',\
\mathrm{II}_{\mathrm{CP},\mathrm{CN}},\
\mathrm{II}_{\mathrm{CN},\mathrm{CP}},\
\mathrm{II}_{\mathrm{CP},\mathrm{CN}}',\
\mathrm{II}_{\mathrm{CN},\mathrm{CP}}',\
\mathrm{III}_{\mathrm{CP},\mathrm{CP},\mathrm{CP}}
 \}\]
where we define 
\[[a,b,c]_{\mathrm{CP}}=[a,b,c]=[a,c,b]_{\mathrm{CN}}.\]
We note that other generating sets of classical Reidemeister moves
are possible, and that this particular set is not minimal. 
\end{example}

\begin{example}
The case that originally motivated this paper was the idea of having different
tribracket operations at single-component crossings and at multicomponent 
crossings. In this case we need $T=\{SP,SN,MP,MN\}$ and
\[M=\left\{\begin{array}{l}
\mathrm{I}_{\mathrm{SP}},\ 
\mathrm{I}_{\mathrm{SN}},\
\mathrm{I}_{\mathrm{SP}}',\
\mathrm{I}_{\mathrm{SN}}',\\
\mathrm{II}_{\mathrm{SP},\mathrm{SN}},\
\mathrm{II}_{\mathrm{SN},\mathrm{SP}},\
\mathrm{II}_{\mathrm{SP},\mathrm{SN}}',\
\mathrm{II}_{\mathrm{SN},\mathrm{SP}}',\
\mathrm{II}_{\mathrm{MP},\mathrm{MN}},\
\mathrm{II}_{\mathrm{MN},\mathrm{MP}},\
\mathrm{II}_{\mathrm{MP},\mathrm{MN}}',\
\mathrm{II}_{\mathrm{MN},\mathrm{MP}}',\\
\mathrm{III}_{\mathrm{SP},\mathrm{SP},\mathrm{SP}},\
\mathrm{III}_{\mathrm{SP},\mathrm{MP},\mathrm{MP}},\
\mathrm{III}_{\mathrm{MP},\mathrm{SP},\mathrm{MP}},\
\mathrm{III}_{\mathrm{MP},\mathrm{MP},\mathrm{SP}},\
\mathrm{III}_{\mathrm{MP},\mathrm{MP},\mathrm{MP}}\end{array}
 \right\}\]
since (1) Reidemeister I moves are only single-component, (2) 
in a Reidemeister II move both crossings are single-component or both are
multi-component, and (3) in a Reidemeister III move
we can have all three crossings on the same component, one crossing
with both strands on the same component and two on different components, or 
all three crossings on different components.
\[\includegraphics{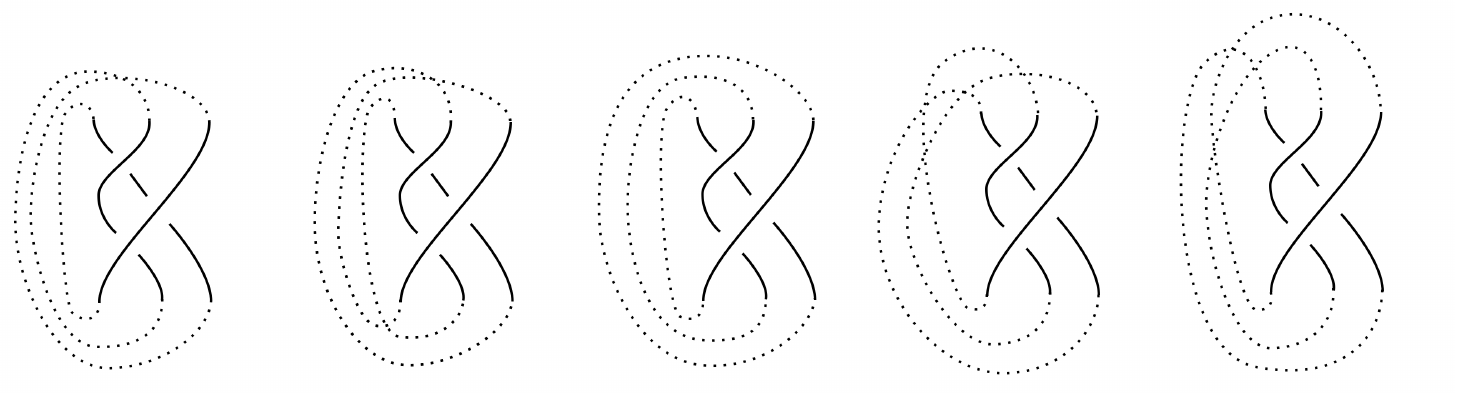}\]
As in the classical case, we can simplify this somewhat by setting
\[[a,b,c]_{\mathrm{SP}}=[a,b,c]_0=[a,c,b]_{\mathrm{SN}}.\]
\[[a,b,c]_{\mathrm{MP}}=[a,b,c]_1=[a,c,b]_{\mathrm{MN}}.\]
We call such an $X$ a \textit{multicomponent multi-tribracket}.
For such an $X$, $\Phi_X^{\mathbb{Z}}(L)$ is an invariant of oriented links which
agrees with the usual counting invariant with respect to the $X_0$ 3-tensor
for knots but can be different for links.

For instance, the pair of operation 3-tensors
\[
\left[\left[
\begin{array}{rrr}
1& 2& 3\\
3& 1& 2\\
2& 3& 1
\end{array}\right],\left[\begin{array}{rrr}
2& 3& 1\\
1& 2& 3\\
3& 1& 2
\end{array}\right],\left[\begin{array}{rrr}
3& 1& 2\\
2& 3& 1\\
1& 2& 3
\end{array}\right]\right]_0,\]\[
\left[\left[\begin{array}{rrr}
1& 3& 2\\
3& 2& 1\\
2& 1& 3
\end{array}\right],\left[\begin{array}{rrr}
3& 2& 1\\
2& 1& 3 \\
1& 3& 2
\end{array}\right],\left[\begin{array}{rrr}
2& 1& 3\\
1& 3& 2 \\
3& 2& 1
\end{array}\right]\right]_1
\]
defines a multi-tribracket whose counting invariant distinguishes the links
$L6a2$ and $L6a4$ while the single tribracket counting invariant with respect
to the single-component tribracket $X_0$  does not.
\[
\begin{array}{c}
\includegraphics{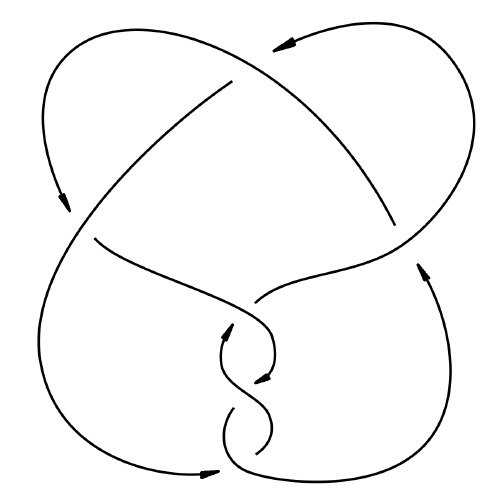} \\
\Phi_X(L6a2)=27\\
\Phi_{X_0}(L6a2)=9
\end{array}
\quad
\begin{array}{c}
\includegraphics{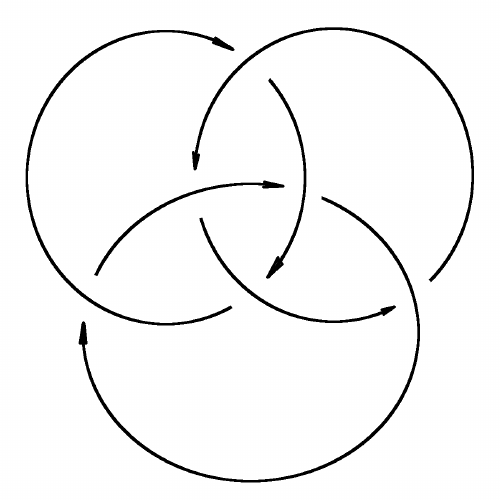} \\
\Phi_X(L6a4)=9\\
\Phi_{X_0}(L6a4)=9
\end{array}
\]
\end{example}

\begin{example}
We computed the counting invariant $\Phi_X^{\mathbb{Z}}(L)$ for all prime links
with up to 7 crossings with respect to the multicomponent multi-tribracket
\[
\left[\left[
\begin{array}{rrr}
1& 3& 2\\
2& 1& 3\\
3& 2& 1
\end{array}\right],\left[\begin{array}{rrr}
2& 1& 3\\
3& 2& 1\\
1& 3& 2
\end{array}\right],\left[\begin{array}{rrr}
3& 2& 1\\
1& 3& 2\\
2& 1& 3
\end{array}\right]\right]_0,\]\[
\left[\begin{array}{rrr}
2& 1& 3\\
1& 3& 2 \\
3& 2& 1
\end{array}\right],
\left[\left[\begin{array}{rrr}
1& 3& 2\\
3& 2& 1\\
2& 1& 3
\end{array}\right],\left[\begin{array}{rrr}
3& 2& 1\\
2& 1& 3 \\
1& 3& 2
\end{array}\right]\right]_1
\]
using our \texttt{python} code. The results are collected in the table.
\[\begin{array}{r|l}
\Phi_X^{\mathbb{Z}}(L) & L \\ \hline
9 & L5a1, L6a4, L7a1, L7a4, L7a5, L7a6  \\
27 & L2a1, L4a1, L6a1, L6a2, L6a3, L7a3\\
81 & L6a5, L6n1, L7a2, L7a7, L7n1, L7n2.
\end{array}\]

We also computed the invariant using the 4-element tribracket
\[
\left[\left[
\begin{array}{rrrr}
2& 3& 4& 1 \\
3& 4& 1& 2 \\
4& 1& 2& 3 \\
1& 2& 3& 4
\end{array}\right],\left[\begin{array}{rrrr}
1& 2& 3& 4 \\
2& 3& 4& 1 \\
3& 4& 1& 2 \\
4& 1& 2& 3
\end{array}\right],\left[\begin{array}{rrrr}
4& 1& 2& 3 \\
1& 2& 3& 4 \\
2& 3& 4& 1 \\
3& 4& 1& 2
\end{array}\right],\left[\begin{array}{rrrr}
3& 4& 1& 2 \\
4& 1& 2& 3 \\
1& 2& 3& 4 \\
2& 3& 4& 1 
\end{array}\right]\right]_0,\]\[
\left[\left[\begin{array}{rrrr}
2& 4& 1& 3 \\
1& 2& 3& 4 \\
4& 3& 2& 1 \\
3& 1& 4& 2 
\end{array}\right],\left[\begin{array}{rrrr}
4& 3& 2& 1 \\
2& 4& 1& 3 \\
3& 1& 4& 2 \\
1& 2& 3& 4
\end{array}\right],\left[\begin{array}{rrrr}
1& 2& 3& 4 \\
3& 1& 4& 2 \\
2& 4& 1& 3 \\
4& 3& 2& 1
\end{array}\right],\left[\begin{array}{rrrr}
3& 1& 4& 2 \\
4& 3& 2& 1 \\
1& 2& 3& 4 \\
2& 4& 1& 3
\end{array}\right]\right]_1
\]
using our \texttt{python} code. The results are collected in the table.
\[\begin{array}{r|l}
\Phi_X^{\mathbb{Z}}(L) & L \\ \hline
0 & L6a4 \\
16 & L5a1, L7a1, L7a3, L7a4, L7a5, L7a6 \\
32 & L2a1, L6a2, L6a3, L7n1 \\
64 & L4a1, L6a1, L6a5, L6n1, L7n1, L7a2, L7a7, L7n2.
\end{array}\]
\end{example}

\begin{example}
A \textit{virtual tribracket}, originally defined in 
\cite{NP} (where vertical tribracket notation was used), 
is a multi-tribracket with 
 $T=\{\mathrm{CP},\mathrm{CN},\mathrm{V}\}$ and 
\[M=\left\{\begin{array}{l}
\mathrm{I}_{\mathrm{CP}},\ 
\mathrm{I}_{\mathrm{CN}},\
\mathrm{I}_{\mathrm{V}},\
\mathrm{I}_{\mathrm{CP}}',\
\mathrm{I}_{\mathrm{CN}}',\
\mathrm{I}_{\mathrm{V}}',\ \\
\mathrm{II}_{\mathrm{CP},\mathrm{CN}},\
\mathrm{II}_{\mathrm{CN},\mathrm{CP}},\
\mathrm{II}_{\mathrm{V},\mathrm{V}},\
\mathrm{II}_{\mathrm{CP},\mathrm{CN}}',\
\mathrm{II}_{\mathrm{CN},\mathrm{CP}}',\
\mathrm{II}_{\mathrm{V},\mathrm{V}}',\ \\
\mathrm{III}_{\mathrm{CP},\mathrm{CP},\mathrm{CP}},\
\mathrm{III}_{\mathrm{V},\mathrm{CP},\mathrm{V}},\
\mathrm{III}_{\mathrm{V},\mathrm{V},\mathrm{V}}.\
\end{array}\right\}\]
%In \cite{NP} the vertical tribracket notation was used, so we have 
%$\langle ,,\rangle_0=[,,]$ and $\langle ,,\rangle_1=\langle,,\rangle$.
%\[\includegraphics{}\]
Then for example, the pair of 3-tensors
\[
\left[\left[
\begin{array}{rrr}
1& 2& 3\\
3& 1& 2\\
2& 3& 1
\end{array}\right],\left[\begin{array}{rrr}
2& 3& 1\\
1& 2& 3\\
3& 1& 2
\end{array}\right],\left[\begin{array}{rrr}
3& 1& 2\\
2& 3& 1\\
1& 2& 3
\end{array}\right]\right]_0,\]\[
\left[\left[\begin{array}{rrr}
2& 3& 1\\
1& 2& 3\\
3& 1& 2
\end{array}\right],\left[\begin{array}{rrr}
3& 1& 2\\
2& 3& 1 \\
1& 2& 3
\end{array}\right],\left[\begin{array}{rrr}
1& 2& 3\\
3& 1& 2 \\
2& 3& 1
\end{array}\right]\right]_1
\]
defines a virtual tribracket by
\[[a,b,c]_{\mathrm{CP}}=[a,b,c]_0=[a,c,b]_{\mathrm{CN}}\]
and
\[[a,b,c]_{\mathrm{V}}=[a,b,c]_1\]
whose counting invariant distinguishes the link 
\[\includegraphics{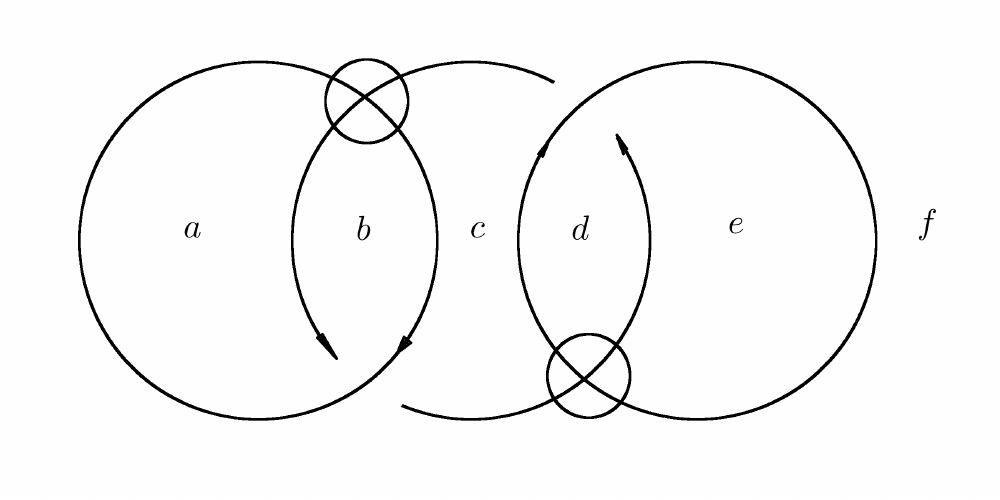}\]
from the unlink of two components, since while the latter has $3^3=27$
colorings, a coloring of the former must satisfy 
\[[a,b,f]_0=[e,d,f]_0=[a,f,b]_1=[e,f,d]_1=c=\] and it is easy to see that 
no such triple exists; hence this virtual link has no $X$-colorings.
\end{example}

\begin{example}
For our final example let us consider the case of \textit{welded links}.
Welded links can be understood as virtual links in which we are allowed to move
a classical strand over but not under a virtual crossing, thinking of the
virtual crossing as ``welded'' to the paper (see e.g. \cite{ABMW}.) 

This extra move means we can obtain invariants of welded isotopy by
taking counting invariants using \textit{welded tribrackets},
multi-tribrackets with $T=\{CP,CN,V\}$ and 
\[M=\left\{\begin{array}{l}
\mathrm{I}_{\mathrm{CP}},\
\mathrm{I}_{\mathrm{CN}},\
\mathrm{I}_{\mathrm{V}},\
\mathrm{I}_{\mathrm{CP}}',\
\mathrm{I}_{\mathrm{CN}}',\
\mathrm{I}_{\mathrm{V}}',\ \\
\mathrm{II}_{\mathrm{CP},\mathrm{CN}},\
\mathrm{II}_{\mathrm{CN},\mathrm{CP}},\
\mathrm{II}_{\mathrm{V},\mathrm{V}},\
\mathrm{II}_{\mathrm{CP},\mathrm{CN}}',\
\mathrm{II}_{\mathrm{CN},\mathrm{CP}}',\
\mathrm{II}_{\mathrm{V},\mathrm{V}}',\ \\
\mathrm{III}_{\mathrm{CP},\mathrm{CP},\mathrm{CP}},\
\mathrm{III}_{\mathrm{CP},\mathrm{CP},\mathrm{V}},\
\mathrm{III}_{\mathrm{V},\mathrm{CP},\mathrm{V}},\
\mathrm{III}_{\mathrm{V},\mathrm{V},\mathrm{V}}.\
\end{array}\right\}\]
Our python code reveals welded tribrackets including for example
\[\scalebox{0.95}{$
\left[\left[\begin{array}{rrrrr}
4 &1 &2 &5 &3 \\
1 &2 &3 &4 &5 \\
2 &3 &5 &1 &4 \\
5 &4 &1 &3 &2 \\
3 &5 &4 &2 &1
\end{array}\right],\left[\begin{array}{rrrrr}
5 &4 &1 &3 &2 \\
4 &1 &2 &5 &3 \\
1 &2 &3 &4 &5 \\
3 &5 &4 &2 &1 \\
2 &3 &5 &1 &4
\end{array}\right],\left[\begin{array}{rrrrr}
3 &5 &4 &2 &1 \\
5 &4 &1 &3 &2 \\
4 &1 &2 &5 &3 \\
2 &3 &5 &1 &4 \\
1 &2 &3 &4 &5
\end{array}\right],\left[\begin{array}{rrrrr}
1 &2 &3 &4 &5 \\
2 &3 &5 &1 &4 \\
3 &5 &4 &2 &1 \\
4 &1 &2 &5 &3 \\
5 &4 &1 &3 &2
\end{array}\right],\left[\begin{array}{rrrrr}
2 &3 &5 &1 &4 \\
3 &5 &4 &2 &1 \\
5 &4 &1 &3 &2 \\
1 &2 &3 &4 &5 \\
4 &1 &2 &5 &3
\end{array}\right]\right]_0$}
\]
\[\scalebox{0.95}{$
\left[\left[\begin{array}{rrrrr}
2 &3 &5 &1 &4 \\
3 &5 &4 &2 &1 \\
5 &4 &1 &3 &2 \\
1 &2 &3 &4 &5 \\
4 &1 &2 &5 &3
\end{array}\right],\left[\begin{array}{rrrrr}
1 &2 &3 &4 &5 \\
2 &3 &5 &1 &4 \\
3 &5 &4 &2 &1 \\
4 &1 &2 &5 &3 \\
5 &4 &1 &3 &2
\end{array}\right],\left[\begin{array}{rrrrr}
4 &1 &2 &5 &3 \\
1 &2 &3 &4 &5 \\
2 &3 &5 &1 &4 \\
5 &4 &1 &3 &2 \\
3 &5 &4 &2 &1
\end{array}\right],\left[\begin{array}{rrrrr}
3 &5 &4 &2 &1 \\
5 &4 &1 &3 &2 \\
4 &1 &2 &5 &3 \\
2 &3 &5 &1 &4 \\
1 &2 &3 &4 &5
\end{array}\right],\left[\begin{array}{rrrrr}
5 &4 &1 &3 &2 \\
4 &1 &2 &5 &3 \\
1 &2 &3 &4 &5 \\
3 &5 &4 &2 &1 \\
2 &3 &5 &1 &4
\end{array}\right]\right]_1.$}
\]
It then follows that the number of colorings of a welded link diagram by 
a welded tribracket is an invariant of welded links.

\end{example}

\section{Questions}\label{Q}

We conclude in this section with several possibilities for future directions
of research. 

\begin{itemize}
\item \textit{Parity multi-tribrackets}. For any crossing in a knot or link
diagram, the \textit{parity} (even or odd) of the number of crossing points
one encounters on the journey from the overcrossing point to the undercrossing
point is unchanged by Reidemeister moves. This has led to the notion of 
\textit{parity biquandles} in \cite{KK}, which are effectively just usual 
biquandles when applied to classical knots but yield stronger invariants
for virtual links. Parity multi-tribrackets are more cumbersome 
because of the necessity of including an operation at virtual crossings,
but a parity virtual tribracket can be descrbed with a triple of compatible 
3-tensors defining operations at even, odd and virtual crossings.
\item \textit{Enhancements}. As with quandles, biquandles and other coloring
structures, we can define enhancements of the multi-tribracket counting 
invariant, e.g. extending the tribracket modules in \cite{KNS} to the case of
multi-tribrackets.
\item \textit{Homology}. How can we modify the tribracket homology found in
\cite{MN4,NOO} to adapt to the case of multi-tribrackets?
\item \textit{Multi-cocycle enhancements}. Following \cite{CN,KNS}, define
a general theory of multi-tribracket cocycle enhancements.
\end{itemize}

\bibliography{sn-ep}{}
\bibliographystyle{abbrv}

\noindent
\textsc{Department of Mathematical Sciences \\
Claremont McKenna College \\
850 Columbia Ave. \\
Claremont, CA 91711}

\end{document}